\documentclass[12pt]{article}
\usepackage{amsmath,amssymb,amsthm,amsfonts,mathrsfs}
\usepackage{appendix}
\usepackage{array,enumitem,graphics,xcolor,yfonts,colonequals}
\usepackage{stmaryrd}
\usepackage[all,cmtip]{xy}
\usepackage{tikz-cd}
\usepackage[colorlinks,anchorcolor=blue,citecolor=blue,linkcolor=blue,urlcolor=blue,bookmarksopen=true]{hyperref}
\usepackage{comment}
\usepackage{mathtools}
\usepackage[margin=1in]{geometry}

\usepackage{titlesec}
\titleformat{\section}
  {\normalfont\large\bf}{\thesection}{1em}{}
\titleformat{\subsection}[runin]
  {\normalfont\normalsize\bf}{\thesubsection}{1em}{}
\titleformat{\subsubsection}[runin]
  {\normalfont\normalsize\bf}{\thesubsubsection}{1em}{}

\usepackage{fancyhdr}
\fancypagestyle{titlepage}
{
	\fancyhf{}

	\fancyfoot[l]{
	\href{https://mathscinet.ams.org/mathscinet/msc/msc2020.html}
		{\emph{2020 Mathematics Subject Classification}}
		53D37, 
        14F08, 
        14J33, 
        18G70 
        (Needs update!!!)
        \\
		\emph{Keywords}: TBD
	}
}

\AtBeginDocument{%
	\def\MR#1{}
}

\newcommand{\bP}{\mathbb{P}}    
\newcommand{\bR}{\mathbb{R}}    
\newcommand{\bZ}{\mathbb{Z}}    

\newcommand{\sA}{\mathscr{A}}   
\newcommand{\sF}{\mathscr{F}}   
\newcommand{\sP}{\mathscr{P}}   
\newcommand{\sT}{\mathscr{T}}   

\newcommand{\Amp}{\mathrm{Amp}}         
\newcommand{\Arg}{\mathrm{Arg}}
\newcommand{\ch}{\mathrm{ch}}           
\newcommand{\Coh}{\mathrm{Coh}} 

\newcommand{\Db}{\mathrm{D^b}}  

\newcommand{\rank}{\mathrm{rank}}

\newcommand{\NS}{\operatorname{NS}}     

\newtheorem*{thm*}{Theorem}
\newtheorem*{prop*}{Proposition}
\newtheorem*{cor*}{Corollary}
\newtheorem*{ques*}{Question}
\newtheorem{thm}{Theorem}[section]

\newtheorem{conj}[thm]{Conjecture}

\numberwithin{equation}{section}

\theoremstyle{definition}
\newtheorem{defn}[thm]{Definition}
\newtheorem{eg}[thm]{Example}
\newtheorem{rmk}[thm]{Remark}

\begin{document}
\title{Notes on the deformed Hermitian--Yang--Mills equations and the large scaling limits of stability conditions}
\author{Yu-Wei Fan}
\date{}
\maketitle

\begin{abstract}
In this short note, we show that, assuming a conjecture of Arcara and Miles, a line bundle on a smooth complex projective surface admits a deformed Hermitian--Yang--Mills metric if and only if it is stable in the ``large scaling limit" with respect to a generic K\"ahler form. The same statement for toric surfaces was recently proved by Stoppa. The purpose of this note is to remark that this equivalence holds for arbitrary smooth projective surfaces.
\end{abstract}

\section{Introduction.}
The notion of stability conditions on triangulated categories was introduced by Bridgeland \cite{BriStab}, with inspiration from work in string theory. It is expected that on Fukaya-type categories, stable objects are given by special Lagrangian submanifolds. Under semi-flat mirror symmetry (i.e.~in the absence of quantum corrections), special Lagrangian sections are mirror to line bundles admitting deformed Hermitian--Yang--Mills (dHYM) metrics \cite{LYZ}. It is therefore natural to compare the Bridgeland stability of line bundles with the existence of dHYM metrics. In this note, we focus on the case of smooth complex projective surfaces, where both stability conditions are well-studied \cite{BriK3,ArcaraBertram,CollinsJacobYau}.

It is known that these two notions of stability are \emph{not} equivalent. For instance, there are examples of line bundles that are Bridgeland stable but do not admit solutions to the dHYM equation \cite[Page~31]{CollinsShi}. This is not unexpected; the central charge corresponding to the dHYM equation is only an approximation of the exact, quantum-corrected central charge at the large volume limit. Another crucial difference lies in \emph{scaling invariance}: dHYM stability is scaling-invariant, meaning that if a line bundle $L$ is dHYM-stable with respect to $\omega$, then $L^{\otimes k}$ is dHYM-stable with respect to $k\omega$ for any $k\geq1$. This property does not generally hold for the Bridgeland stability of line bundles (see Example~\ref{eg:not-scaling-invariant}).

To bridge this conceptual gap, Stoppa \cite{Stoppa2505,Stoppa2508} recently introduced a notion that incorporates this scaling invariance into Bridgeland stability conditions, which we will refer to as the ``\emph{large scaling limit}'' (termed the \emph{large volume, large Lagrangian limit} in \cite{Stoppa2505,Stoppa2508}). It is proved that, assuming a conjecture by Arcara and Miles (see Conjecture~\ref{conj:ArcaraMiles}), a line bundle on a toric surface is dHYM-stable if and only if it is stable in the large scaling limit for a generic K\"ahler class \cite[Proposition~6.2]{Stoppa2505}. The purpose of this note is to provide a remark that this holds for \emph{any} smooth projective surface, following essentially the same argument as in \cite{Stoppa2505}.

To make this precise, we first recall the formulation of this asymptotic stability condition. Following \cite{Stoppa2505,Stoppa2508}, we make the following definition:

\begin{defn}
Let $X$ be a smooth complex projective surface, and let $\omega\in\Amp(X)$ be an ample divisor class. We say a line bundle $L$ on $X$ is \emph{stable in the large scaling limit with respect to $\omega$} if there exists $k_0>0$ such that $L^{\otimes k}$ is $\sigma_{k\omega}$-stable for all $k\geq k_0$. (The stability condition $\sigma_{k\omega}$ on $\Db(X)$ is briefly reviewed in Section~\ref{subsec:Stability}).
\end{defn}

To understand the analytic side of this comparison, we rely on an explicit numerical criterion for the solvability of the dHYM equation for line bundles on surfaces given in \cite[Proposition~8.5]{CollinsJacobYau}:
Let $L$ be a line bundle on a compact K\"ahler surface $(X,\omega)$. A solution to the dHYM equation for $L$ exists if and only if, for every curve $C\subseteq X$, we have
$$
\operatorname{Im}\left(\frac{Z_C(L)}{Z_X(L)}\right) \coloneqq \operatorname{Im}\left(\frac{C.\left(-c_1(L)+\sqrt{-1}\omega\right)}{\frac{1}{2}\left(\omega+\sqrt{-1}c_1(L)\right)^2}\right)>0.
$$
In this note, we will refer to line bundles $L$ satisfying this numerical condition as being \emph{dHYM-stable with respect to $\omega$}. Relaxing this strict inequality yields the corresponding notion of semistability:

\begin{defn}
Let $L$ be a line bundle on a compact K\"ahler surface $(X,\omega)$.
We say $L$ is \emph{dHYM-semistable with respect to $\omega$} if for every curve $C\subseteq X$, we have
$$
\operatorname{Im}\left(\frac{Z_C(L)}{Z_X(L)}\right)\geq0.
$$
\end{defn}

\noindent Note that for a generic K\"ahler class $\omega$, the notions of dHYM-stability and dHYM-semistability are equivalent (see Remark~\ref{rmk:general-dHYM-stable}).

With these definitions established, we are now ready to state the following.

\begin{thm}
\label{thm:MainThm}
Let $X$ be a smooth complex projective surface, $\omega$ an ample divisor class, and $L$ a line bundle. Then we have the following:
\begin{enumerate}[label=(\alph*)]
    \item\label{item:MainThm(a)} If $L$ is stable in the large scaling limit with respect to $\omega$, then $L$ is dHYM-semistable with respect to $\omega$.
    \item\label{item:MainThm(b)} Assuming a conjecture of Arcara and Miles (see Conjecture~\ref{conj:ArcaraMiles} below), if $L$ is dHYM-semistable with respect to $\omega$, then $L^{\otimes k}$ is $\sigma_{k\omega}$-stable for \emph{all} $k\geq1$. (Thus, $L$ is not only stable in the large scaling limit, but stable under \emph{all} scalings of $\omega$.)
\end{enumerate}
\end{thm}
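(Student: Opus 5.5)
The plan is to compare the central charges of the only relevant potential destabilizers of $L^{\otimes k}$ with the dHYM quantities $Z_C(L)$ and $Z_X(L)$, keeping track of how each term scales in $k$. Concretely, for a curve $C\subseteq X$ I will use the short exact sequence of sheaves
\[
0\to L^{\otimes k}(-C)\to L^{\otimes k}\to L^{\otimes k}|_C\to 0
\]
and its rotations. Depending on the sign of $\omega.c_1(L)$, either $L^{\otimes k}$ or $L^{\otimes k}[1]$ lies in the heart of $\sigma_{k\omega}$ (as recalled in Section~\ref{subsec:Stability}).
\begin{itemize}
\item If $L^{\otimes k}$ lies in the heart, the candidate destabilizing quotient is $L^{\otimes k}|_C$.
\item If $L^{\otimes k}[1]$ lies in the heart, I will use the triangle $L^{\otimes k}(C)|_C\to L^{\otimes k}[1]\to L^{\otimes k}(C)[1]$. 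This exhibits the torsion sheaf $L^{\otimes k}(C)|_C$ as a subobject of $L^{\otimes k}[1]$.
\end{itemize}
In both cases I will compute $Z_{k\omega}$ explicitly from $\ch$. The rank-one object has $Z_{k\omega}(L^{\otimes k})=k^2 Z_X(L)$, up to the constant contributions of the central charge. The torsion sheaf has $Z_{k\omega}(L^{\otimes k}|_C)=k\,Z_C(L)+\tfrac12 C^2$, with the analogous formula $-\tfrac12C^2$ for $L^{\otimes k}(C)|_C$. The exact lower-order term comes from $\ch_2(\mathcal O_C\otimes L^{\otimes k})=k\,c_1(L).C-\tfrac12C^2$.

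For part \ref{item:MainThm(a)} I argue by contraposition. Suppose there is a curve $C$ with $\operatorname{Im}(Z_C(L)/Z_X(L))<0$. After dividing by the appropriate powers of $k$, the arguments of $Z_{k\omega}(L^{\otimes k}|_C)$ and $Z_{k\omega}(L^{\otimes k})$ converge to those of $Z_C(L)$ and $Z_X(L)$ respectively. The strict inequality is open, so for all $k\gg0$ the quotient $L^{\otimes k}|_C$ has strictly smaller phase than $L^{\otimes k}$; in the shifted case, the subobject has strictly larger phase. Either way $L^{\otimes k}$ is not $\sigma_{k\omega}$-stable for all large $k$, so $L$ is not stable in the large scaling limit. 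Two points need separate care. First, the phases must be taken in the correct branch, i.e.\ within $(0,1]$ for objects of the heart. Second, the borderline case $\omega.c_1(L)=0$ is where $Z_X(L)$ is close to the negative real axis. I expect both to follow from the positivity $\omega.C>0$, which pins down the imaginary parts.

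For part \ref{item:MainThm(b)} I invoke Conjecture~\ref{conj:ArcaraMiles}. As I understand it, it states that a line bundle (or its shift) in the heart can only be destabilized by the objects above, with $C$ a curve of negative self-intersection. So it suffices to check that for every $k\geq1$ and every such $C$ the torsion sheaf does not destabilize. Exact $k$-dependence is now needed, not just asymptotics, and this is the main obstacle. The sign of the phase comparison is governed by
\[
\operatorname{Im}\bigl(Z_{k\omega}(\text{torsion})\,\overline{Z_{k\omega}(L^{\otimes k})}\bigr).
\]
This splits into two parts. The first is $k^3$ times the dHYM quantity $\operatorname{Im}(Z_C(L)\overline{Z_X(L)})$, which is $\geq0$ by dHYM-semistability. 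The second is a correction proportional to $\pm\tfrac12 C^2\cdot\operatorname{Im}\overline{Z_{k\omega}(L^{\otimes k})}=\mp\tfrac12C^2\,k^2\,\omega.c_1(L)$. I will check that in each of the two cases (sign of $\omega.c_1(L)$, hence choice of destabilizer) this correction is strictly positive precisely because $C^2<0$. Then the total is strictly positive for every $k\geq1$, giving stability rather than mere semistability. The remaining case $\omega.c_1(L)=0$ would be handled directly, since then $L^{\otimes k}[1]$ has phase $1$. If the signs fail to cooperate in one case, my fallback is to use the other rotation of the triangle, or the dual sequence involving $L^{\otimes k}(-C)$.
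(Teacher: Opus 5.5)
Your proposal is correct and is essentially the paper's argument. It uses the same two sequences $0\to L^{\otimes k}(-C)\to L^{\otimes k}\to L^{\otimes k}|_C\to0$ and $L^{\otimes k}(C)|_C\to L^{\otimes k}[1]\to L^{\otimes k}(C)[1]$. It takes a $k\to\infty$ limit for (a); your contrapositive phrasing via the quotient $L^{\otimes k}|_C$ is equivalent, by the see-saw property, to the paper's comparison with the subobject $L^{\otimes k}(-C)$. For (b) it observes that the $C^2$-correction is strictly positive in both slope cases when $C^2<0$, so the signs do cooperate and no fallback is needed.

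Three small points should be made explicit. First, these sequences lie in $\sA_{k\omega}$ only for $k\gg0$; this suffices for (a), and in (b) the conjecture itself supplies the subobject. Second, in the slope-zero case of (b), phase $1$ alone gives only semistability, so stability must be quoted from Remark~\ref{rmk:BSC}\ref{item:BSC-slope=0}. Third, in the slope-zero case of (a) the dHYM condition is automatic because $\omega.C>0$ and $Z_X(L)=\frac12(\omega^2-c_1(L)^2)>0$ by the Hodge index theorem; positivity of $\omega.C$ alone is not enough.
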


These relationships can be summarized by the following diagram of implications:

\begin{center}
\begin{tikzpicture}[
    box/.style={
        draw, 
        rectangle, 
        minimum width=7.5cm, 
        minimum height=0.8cm, 
        inner sep=2ex
    },
    implies/.style={
        -Implies, 
        double, 
        double equal sign distance, 
        thick
    }
]

\node[box] (A) at (0, 4.5) {$L^{\otimes k}$ is $\sigma_{k\omega}$-stable for all $k\geq1$};
\node[box] (B) at (0, 3.0) {$L^{\otimes k}$ is $\sigma_{k\omega}$-stable for all $k\geq k_0$};
\node[box] (C) at (0, 1.5) {$L$ is dHYM-semistable with respect to $\omega$};
\node[box] (D) at (0, 0.0) {$L$ is dHYM-stable with respect to $\omega$};

\draw[implies] (A) -- (B);
\draw[implies] (B) -- (C);
\draw[implies] (D) -- (C); 


\draw[implies] (C.east) to[out=0, in=0, looseness=.7] 
    node[right, xshift=1mm] {assuming Conjecture~\ref{conj:ArcaraMiles}} (A.east);

\draw[implies] (C.east) to[out=0, in=0, looseness=1] 
    node[right, xshift=1mm] {for generic $\omega$} (D.east);

\path (C.west) +(-3cm, 0);

\end{tikzpicture}
\end{center}

Note that the genericity assumption appears unavoidable; there exist examples where $L$ is stable under any scaling but fails to be dHYM-stable (see Example~\ref{eg:scalingstable-notdHYMstable}).

\begin{rmk}
In the context of Bridgeland stability conditions, it is customary to consider not only an ample class $\omega$, but a \emph{complexified} ample class $B+i\omega\in \NS(X)_\bR+\sqrt{-1}\Amp(X)$, to which one can associate a Bridgeland stability condition $\sigma_{B,\omega}$. Similarly, for the dHYM equation of line bundles, there is a natural way to incorporate the ``$B$-field'' into the equation, yielding what we refer to as the \emph{$B$-twisted dHYM equation}. We will prove the twisted version of Theorem~\ref{thm:MainThm} in the subsequent section. One can easily recover Theorem~\ref{thm:MainThm} by setting $B=0$. We state the untwisted version in the introduction, as it is the most familiar formulation encountered in the literature.
\end{rmk}

\subsection*{Acknowledgements}
The author would like to thank Professor~Jacopo~Stoppa for enlightening discussions, and Professor~Shing-Tung~Yau for his constant encouragement and support.

\section{Deformed Hermitian--Yang--Mills equations and large scaling limits.}

\subsection{Deformed Hermitian--Yang--Mills equations.}

Let $(X,\omega)$ be a compact K\"ahler manifold of complex dimension $n$, and let $A\in H^{1,1}(X,\bR)$ be a real divisor class. 
The deformed Hermitian--Yang--Mills (dHYM) equation, as studied in \cite{CollinsJacobYau}, asks whether there exists a smooth, closed $(1,1)$-form $\alpha$ such that $[\alpha]=A$ and 
$$
\operatorname{Im}(\omega+\sqrt{-1}\alpha)^n=\tan(\Theta)\operatorname{Re}(\omega+\sqrt{-1}\alpha)^n,
$$
where $\Theta$ is a topological constant determined by $[\omega]$ and $A$.
Note that when $A=c_1(L)$ for a holomorphic line bundle $L\to X$, a solution to the dHYM equation yields a dHYM metric on $L$, an object that plays an important role in SYZ mirror symmetry \cite{LYZ}.

One can incorporate a background $B$-field into this geometric setup, which leads to the following twisted version of the equation:

\begin{defn}
Let $(X,\omega)$ be a compact K\"ahler manifold, $L$ a line bundle on $X$, and $B\in H^{1,1}(X,\bR)$ a real divisor class.
We say a smooth, closed $(1,1)$-form $\alpha$ is a solution to the \emph{$B$-twisted dHYM equation for $L$} if
$$
\begin{cases}
    [\alpha]=c_1(L)-B, \\
    \operatorname{Im}(\omega+\sqrt{-1}\alpha)^n=\tan(\Theta)\operatorname{Re}(\omega+\sqrt{-1}\alpha)^n.
\end{cases}
$$
\end{defn}

In complex dimension two, the existence of solutions to this equation is completely characterized by an explicit numerical criterion.

\begin{thm}[{\cite[Proposition 8.5]{CollinsJacobYau}}]
Let $L$ be a line bundle on a compact K\"ahler surface $(X,\omega)$. 
A solution to the $B$-twisted dHYM equation for $L$ exists if and only if, for every curve $C\subseteq X$, we have
$$
\operatorname{Im}\left(\frac{Z_C}{Z_X}\right)\coloneqq\operatorname{Im}\left(\frac{C.\left(-(c_1(L)-B)+\sqrt{-1}\omega\right)}{\frac{1}{2}\left(\omega+\sqrt{-1}(c_1(L)-B)\right)^2}\right)>0.
$$
In this case, we say $L$ is \emph{$B$-twisted dHYM-stable with respect to $\omega$}.
\end{thm}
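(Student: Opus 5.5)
Since the equation and the numerical condition only involve the real class $A\coloneqq c_1(L)-B\in H^{1,1}(X,\bR)$, I would prove the statement for an arbitrary real $(1,1)$-class $A$. This is the setting of \cite[Proposition~8.5]{CollinsJacobYau}, so the twisted statement follows formally from the untwisted one. The plan is to show that in complex dimension two the dHYM equation is a Calabi--Yau complex Monge--Amp\`ere equation in disguise. Existence then reduces to a positivity (K\"ahler) condition on a single cohomology class, and that condition can be matched with the curve inequality by a direct computation.

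\textbf{Step 1: rewrite the equation.} With $n=2$, the equation $\operatorname{Im}(\omega+\sqrt{-1}\alpha)^2=\tan(\Theta)\operatorname{Re}(\omega+\sqrt{-1}\alpha)^2$ reads $2\omega\wedge\alpha=\tan\Theta\,(\omega^2-\alpha^2)$. Suppose first that $\omega.A\neq0$, so $\tan\Theta\neq0$ and $\cot\Theta=\frac{\omega^2-A^2}{2\,\omega.A}=\operatorname{Re}Z_X/\operatorname{Im}Z_X$. Completing the square gives
$$(\alpha+\cot\Theta\,\omega)^2=\csc^2\Theta\,\omega^2 .$$
Put $\epsilon=\operatorname{sgn}(\omega.A)$ and $\chi\coloneqq\epsilon(\alpha+\cot\Theta\,\omega)$. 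Solving the dHYM equation is then equivalent to finding a K\"ahler form $\chi$ in the class $[\chi]=\epsilon(A+\cot\Theta[\omega])$ with $\chi^2=\csc^2\Theta\,\omega^2$.

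\textbf{Step 2: existence of $\chi$.} By Yau's theorem, such a K\"ahler $\chi$ exists if and only if $[\chi]$ is a K\"ahler class. The normalization is automatic, because $\Theta$ is defined topologically so that $[\chi]^2=\csc^2\Theta\,[\omega]^2>0$. On a compact K\"ahler surface, Buchdahl--Lamari (or Demailly--P\u{a}un) says that a real $(1,1)$-class with positive square, lying in the positive cone component containing $[\omega]$, is K\"ahler if and only if it is positive on every curve. So we need two facts:
\begin{itemize}
\item $[\chi]$ lies in the correct component of the positive cone; I would check this using $[\chi].[\omega]>0$, which should follow from the choice of $\epsilon$.
\item $[\chi].C>0$ for all curves $C$.
\end{itemize}
Necessity is the converse direction. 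If a solution exists, then $\chi^2>0$ pointwise, so the form $\chi$ is definite at every point. By connectedness and the sign of $\int\chi\wedge\omega$, it is positive, hence $\int_C\chi>0$ for every curve $C$.

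\textbf{Step 3: translate into the curve inequality.} Write $Z_X=\tfrac12(\omega^2-A^2)+\sqrt{-1}\,\omega.A$ and $Z_C=-C.A+\sqrt{-1}\,C.\omega$. A short computation gives
$$\operatorname{Im}(Z_C\overline{Z_X})=\tfrac12 C.\omega\,(\omega^2-A^2)+C.A\,(\omega.A)=(\omega.A)\,C.(A+\cot\Theta\,\omega).$$
Hence $\operatorname{Im}(Z_C/Z_X)=|\omega.A|\,[\chi].C/|Z_X|^2$, so the inequality $\operatorname{Im}(Z_C/Z_X)>0$ for all curves $C$ is exactly the curve-positivity of $[\chi]$.

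\textbf{Degenerate case and main obstacle.} When $\omega.A=0$, the class $A$ is $\omega$-primitive, so the Hodge index theorem gives $A^2\le0$ and therefore $\operatorname{Re}Z_X>0$. Then $\operatorname{Im}(Z_C/Z_X)=C.\omega/\operatorname{Re}Z_X>0$ holds automatically. On the analytic side, $\Theta\equiv0\pmod\pi$ and the equation becomes $\omega\wedge\alpha=0$, which is solved by the $\omega$-harmonic representative of $A$. The step I expect to need the most care is the sign and branch bookkeeping in Step~2: choosing $\Theta$ (the lifted phase, and whether $\sin\Theta$ may be negative) and $\epsilon$ consistently, so that $[\chi]$ lands in the component of the positive cone containing $\omega$ rather than its negative. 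This is needed before Buchdahl--Lamari can be applied.
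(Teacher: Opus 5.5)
Your argument is correct, and it is the standard proof of the cited result. The paper does not prove this statement itself; it imports it from \cite[Proposition~8.5]{CollinsJacobYau}, absorbing the $B$-field into the real class $A=c_1(L)-B$ exactly as you do. Your route is the usual one: reduce to the Monge--Amp\`ere equation $(\alpha+\cot\Theta\,\omega)^2=\csc^2\Theta\,\omega^2$, then apply Yau's theorem and the Buchdahl--Lamari/Demailly--P\u{a}un description of the K\"ahler cone of a surface.

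The sign issue you flag closes by the Hodge index inequality $(\omega.A)^2\geq\omega^2\,A^2$, which gives
\[
[\chi].[\omega]=\frac{2(\omega.A)^2+\omega^2(\omega^2-A^2)}{2\,|\omega.A|}\geq\frac{(\omega.A)^2+(\omega^2)^2}{2\,|\omega.A|}>0 .
\]
No branch of $\Theta$ ever needs choosing, since only $\cot\Theta$ and $\csc^2\Theta=1+\cot^2\Theta$ enter. Your $\cot\Theta$ formulation also covers the case $\omega^2=A^2$, where the paper's $\tan\Theta$ form of the equation must be read as $\operatorname{Re}(\omega+\sqrt{-1}\alpha)^2=0$.
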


\begin{rmk}
\label{rmk:dHYM}
Let $L$ be a line bundle on a surface $(X, \omega)$.
\begin{enumerate}[label=(\alph*)]
    \item\label{item:dHYM-slope-0-stable} If $\omega.(c_1(L)-B)=0$, then the above positivity condition is automatically satisfied for any curve $C\subseteq X$; therefore, a solution to the $B$-twisted dHYM equation for $L$ always exists in this case.
    \item Expanding the intersection products, the above phase inequality can be algebraically rewritten as:
$$
(C.\omega)\left(\omega^2-(c_1(L)-B)^2\right)+2(C.(c_1(L)-B))(\omega.(c_1(L)-B))>0.
$$
\end{enumerate}
\end{rmk}

Weakening the strict inequality leads to the corresponding notion of semistability.

\begin{defn}
We say $L$ is \emph{$B$-twisted dHYM-semistable with respect to $\omega$} if for every curve $C\subseteq X$, we have
$$
\operatorname{Im}\left(\frac{Z_C}{Z_X}\right)\geq0.
$$
Equivalently, this can be written as
$$
(C.\omega)\left(\omega^2-(c_1(L)-B)^2\right)+2(C.(c_1(L)-B))(\omega.(c_1(L)-B))\geq0.
$$
\end{defn}

\begin{rmk}\label{rmk:general-dHYM-stable}
For generic classes $B$ and $\omega$, the notions of dHYM-semistability and dHYM-stability coincide. To see this, observe that each pair of classes $[C],c_1(L)\in H^{1,1}(X,\bZ)$ defines a proper analytic subvariety of the complexified K\"ahler cone $H^{1,1}(X,\bR)+\sqrt{-1}\operatorname{K\ddot{a}h}(X)$ given by the equation 
$$
(C.\omega)\left(\omega^2-(c_1(L)-B)^2\right)+2(C.(c_1(L)-B))(\omega.(c_1(L)-B))=0.
$$
Provided that the pair $(B,\omega)$ avoids this countable union of analytic subvarieties, the equivalence holds.
\end{rmk}

\subsection{Geometric stability conditions on surfaces.}
\label{subsec:Stability}
We refer the reader to the original papers \cite{BriStab,BriK3,ArcaraBertram} for the general definition of Bridgeland stability conditions, as well as the construction of geometric stability conditions on smooth projective surfaces. Here, we recall only the essential statements required for our subsequent discussion.

Let $X$ be a smooth complex projective surface, and let $B,\omega\in\NS(X)\otimes\bR$ with $\omega$ ample. 
Consider the slope function
$$
\mu_\omega(E)\coloneqq\frac{\omega.c_1(E)}{\rank(E)},
$$
defined for coherent sheaves $E\in\Coh(X)$.
Using this slope, one defines the tilted heart 
$$
\sA_{B,\omega}=\{E\in\Db(X) \mid H^0(E)\in\sT_{B,\omega},\,H^{-1}(E)\in\sF_{B,\omega},\,H^i(E)=0\text{ for }i\neq0,-1\},
$$
where 
\begin{itemize}
    \item $\sT_{B,\omega}\subseteq\Coh(X)$ is the full subcategory closed under extensions generated by the torsion sheaves and $\mu_\omega$-stable sheaves with slope $\mu_\omega(E)>B.\omega$.
    \item $\sF_{B,\omega}\subseteq\Coh(X)$ is the full subcategory closed under extensions generated by $\mu_\omega$-stable sheaves with slope $\mu_\omega(E)\leq B.\omega$.
\end{itemize}
Note that $\sA_{B,\omega}$ is the heart of a bounded t-structure on $\Db(X)$, and is therefore an abelian category.

\begin{thm}[\cite{ArcaraBertram,BriK3}]
Let $X$ be a smooth complex projective surface, and let $B,\omega\in\NS(X)\otimes\bR$ with $\omega$ ample. Then
$$
\sigma_{B,\omega}\coloneqq(Z_{B,\omega},\sA_{B,\omega})
$$
is a stability condition on $\Db(X)$, where the central charge $Z_{B,\omega}$ is given by
$$
Z_{B,\omega}(E)=-\int_Xe^{-(B+\sqrt{-1}\omega)}\ch(E).
$$
\end{thm}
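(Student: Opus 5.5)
The plan is to verify the three ingredients of a Bridgeland stability condition in turn: positivity of the central charge on the heart, the Harder--Narasimhan property, and the support property (local finiteness). Throughout, write $\ch^B=e^{-B}\ch$. Expanding the definition gives
$$
Z_{B,\omega}(E)=-\ch_2^B(E)+\tfrac{\omega^2}{2}\ch_0(E)+\sqrt{-1}\,\omega.\ch_1^B(E).
$$
The first step is to show that $Z_{B,\omega}$ maps every nonzero $E\in\sA_{B,\omega}$ into $\bR_{>0}e^{\sqrt{-1}\pi\phi}$ with $\phi\in(0,1]$. Since $Z$ is additive on short exact sequences in the heart, it suffices to check the generators: torsion sheaves and $\mu_\omega$-stable sheaves in $\sT_{B,\omega}$, and shifts $F[1]$ of $\mu_\omega$-stable $F\in\sF_{B,\omega}$.

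\emph{Objects from $\sT_{B,\omega}$.} A torsion sheaf supported in positive dimension has $\omega.\ch_1^B>0$. A zero-dimensional sheaf has $Z=-\mathrm{length}<0$. A torsion-free sheaf in $\sT_{B,\omega}$ has $\omega.\ch_1^B>0$ by the slope condition.

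\emph{Objects $F[1]$ with $F\in\sF_{B,\omega}$.} Here $\operatorname{Im}Z(F[1])=-\omega.\ch_1^B(F)\geq0$. In the boundary case $\omega.\ch_1^B(F)=0$ one needs $\operatorname{Re}Z(F[1])=\ch_2^B(F)-\tfrac{\omega^2}{2}\rank F<0$. This follows from two inequalities. The Bogomolov inequality for $\mu_\omega$-semistable sheaves gives $\ch_2^B\leq (\ch_1^B)^2/(2\rank)$. The Hodge index theorem then gives $(\ch_1^B)^2\leq (\omega.\ch_1^B)^2/\omega^2=0$. Hence $\ch_2^B(F)\leq 0<\tfrac{\omega^2}{2}\rank F$. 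This is exactly where the surface hypothesis and Bogomolov enter.

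The second step is the Harder--Narasimhan property. I would first treat rational $B,\omega\in\NS(X)\otimes\mathbb{Q}$. In that case $\operatorname{Im}Z$ takes values in a discrete subgroup of $\bR$. Bridgeland's argument for tilted hearts (as in \cite{BriK3}) then shows that $\sA_{B,\omega}$ is noetherian, since an infinite chain would force infinitely many strict drops in a discrete, bounded-below quantity. With noetherianity and discreteness of $\operatorname{Im}Z$, Bridgeland's criterion (\cite[Prop.~2.4]{BriStab}) gives HN filtrations.

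The third step is the support property, and then passing to real classes. For local finiteness, or the support property, I would use the quadratic form
$$
Q(E)=(\ch_1^B)^2-2\ch_0\ch_2^B .
$$
The goal is to show $Q\geq0$ on $\sigma_{B,\omega}$-semistable objects. I would argue this by the wall-crossing argument: it holds for large $\omega$, where stable objects are twisted Gieseker stable sheaves satisfying Bogomolov, and the inequality is preserved across walls because $Q$ is negative definite on $\ker Z$. To reach real $(B,\omega)$, I would invoke Bridgeland's deformation theorem. It says the map from stability conditions with the support property to central charges is a local homeomorphism, so the family $\sigma_{B,\omega}$ extends from rational points to all real points. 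One must then check that the deformed stability condition has heart $\sA_{B,\omega}$. This holds because the skyscraper sheaves $\mathcal{O}_x$ stay stable of phase $1$, and Bridgeland's characterisation of geometric stability conditions identifies the heart.

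I expect this last step to be the main obstacle. Establishing the support property uniformly, and identifying the deformed heart with $\sA_{B,\omega}$ for irrational classes, are both delicate; the positivity check in the first step is routine by comparison.
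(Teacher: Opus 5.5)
The paper gives no proof of this statement; it quotes it from \cite{ArcaraBertram,BriK3}. Your outline follows the route of those references, and your positivity step is correct. The Harder--Narasimhan step is acceptable as a citation, but your one-line justification understates it: discreteness of $\operatorname{Im}Z$ only reduces noetherianity to chains along which $\operatorname{Im}Z$ is eventually constant, and the real work in \cite{BriK3} is handling the subquotients with $\operatorname{Im}Z=0$.

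The genuine gap is the support property. Your form $Q=(\ch_1^B)^2-2\ch_0\ch_2^B=\ch_1^2-2\ch_0\ch_2$ is \emph{not} nonnegative on $\sigma_{B,\omega}$-semistable objects once $X$ contains a curve of negative self-intersection, so no wall-crossing argument can prove it.

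Take $C\cong\bP^1$ with $C^2<0$ and $E=\mathcal{O}_C(d)$, so that $Q(E)=C^2<0$. Nevertheless $E$ is $\sigma_{B,t\omega}$-stable for $t\gg0$. A subobject $A\subseteq E$ in the heart is a sheaf in $\sT$ with $0\to K\to A\to I\to 0$, where $K\in\sF$ and $0\neq I\subseteq E$.
\begin{itemize}
\item If $K=0$, then $A\cong\mathcal{O}_C(d-\ell)$ with $\ell\geq 1$. It has the same imaginary part as $E$ and a strictly larger real part, hence smaller phase.
\item If $\rank K>0$, every $\mu_\omega$-HN factor $K_i$ of $K$ satisfies $-\omega.C<\omega.\ch_1^B(K_i)\leq 0$. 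Bogomolov and Hodge index then bound $\ch_2^B(K_i)$ independently of $t$. This gives $\operatorname{Re}Z_{B,t\omega}(A)\geq\tfrac{t^2\omega^2}{2}-c$, with $c$ independent of $t$ and $A$, while $0<\operatorname{Im}Z_{B,t\omega}(A)\leq t\,\omega.C$. So $A$ has smaller phase than $E$ for $t\gg0$.
\end{itemize}
The slip is in your base case. The large-volume semistable objects include pure one-dimensional sheaves, for which $Q=\ch_1^2$ and Bogomolov says nothing. Since your deformation to irrational $(B,\omega)$ rests on this support property, that step is not established as written.

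The standard repair is to use $Q_\omega=Q+C_\omega(\omega.\ch_1^B)^2$, with $C_\omega\geq0$ chosen so that $D^2+C_\omega(\omega.D)^2\geq 0$ for every effective divisor $D$.
\begin{itemize}
\item Such a constant exists because $\omega$ is strictly positive on $\overline{\mathrm{Eff}}(X)\setminus\{0\}$, the closed effective cone minus the origin. Hence $D^2/(\omega.D)^2$ is bounded below on effective classes.
\item The correction term vanishes on $\ker Z_{B,\omega}$, so $Q_\omega$ is still negative definite there, by Hodge index exactly as for $Q$. Your convexity argument across walls therefore goes through unchanged.
\item The base case now holds: for torsion sheaves by the choice of $C_\omega$, and for $\mu_\omega$-semistable torsion-free sheaves by Bogomolov.
\item Taking $C_{t\omega}=C_\omega/t^2$, the form $Q_\omega$ is unchanged along the ray $t\omega$, so the same form serves throughout your large-volume argument.
\end{itemize}
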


As this note primarily concerns the stability of line bundles, their behavior with respect to this tilted heart is of particular interest.

\begin{rmk}
\label{rmk:BSC}
Let $L$ be a line bundle on $X$. Then:
\begin{enumerate}[label=(\alph*)]
    \item\label{item:BSC-slope>0} $L\in\sA_{B,\omega}$ if and only if $\omega.(c_1(L)-B)>0$.
    \item\label{item:BSC-slope<=0} $L[1]\in\sA_{B,\omega}$ if and only if $\omega.(c_1(L)-B)\leq0$.
    \item\label{item:BSC-slope=0} If $\omega.(c_1(L)-B)=0$, then $L$ is $\sigma_{B,\omega}$-stable.
\end{enumerate}
The first two statements follow directly from the construction of the tilted heart. The third statement follows from the characterization of objects in $\sP_{B,\omega}(1)$ (cf.~\cite[Lemma~10.1(b)]{BriK3}).
\end{rmk}

The following conjecture characterizes the precise mechanisms by which a line bundle can fail to be Bridgeland stable.

\begin{conj}[{\cite[Conjecture~1]{ArcaraMiles}}]
\label{conj:ArcaraMiles}
Let $L$ be a line bundle on $X$.
\begin{enumerate}[label=(\alph*)]
    \item\label{item:ArcaraMiles(a)} Suppose $L\in\sA_{B,\omega}$. Then $L$ is not $\sigma_{B,\omega}$-stable if and only if there exists a curve $C$ of negative self-intersection such that $L(-C)$ is a subobject of $L$ in $\sA_{B,\omega}$ with $0<\Arg Z_{B,\omega}(L)\leq\Arg Z_{B,\omega}(L(-C))\leq\pi$.
    \item\label{item:ArcaraMiles(b)} Suppose $L[1]\in\sA_{B,\omega}$. Then $L$ is not $\sigma_{B,\omega}$-stable if and only if there exists a curve $C$ of negative self-intersection such that $L(C)|_C$ is a subobject of $L[1]$ in $\sA_{B,\omega}$ with $0<\Arg Z_{B,\omega}(L[1])\leq\Arg Z_{B,\omega}(L(C)|_C)\leq\pi$.
\end{enumerate}
\end{conj}

This conjecture has been established in several cases, including for surfaces $X$ with no curves of negative self-intersection, for surfaces with $\rank\NS(X)=2$ containing a unique irreducible curve of negative self-intersection \cite[Theorem~1.1]{ArcaraMiles}, and for del Pezzo surfaces with $\rank\NS(X)=3$ \cite{MY25}.

\subsection{Relation between dHYM stability and Bridgeland stability.}

With the algebraic and analytic frameworks in place, we are now ready to establish the connection between them. We prove the following twisted version of Theorem~\ref{thm:MainThm}:

\begin{thm}
Let $X$ be a smooth complex projective surface, $B,\omega\in\NS(X)\otimes\bR$ with $\omega$ ample, and $L$ a line bundle. Then we have the following:
\begin{enumerate}[label=(\alph*)]
    \item\label{item:twisted(a)} If there exists $k_0>0$ such that $L^{\otimes k}$ is $\sigma_{kB,k\omega}$-stable for all $k\geq k_0$, then $L$ is $B$-twisted dHYM-semistable with respect to $\omega$.
    \item\label{item:twisted(b)} Assuming Conjecture~\ref{conj:ArcaraMiles} holds for $X$, if $L$ is $B$-twisted dHYM-semistable with respect to $\omega$, then $L^{\otimes k}$ is $\sigma_{kB, k\omega}$-stable for \emph{all} $k\geq1$. 
\end{enumerate}
\end{thm}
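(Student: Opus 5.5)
Throughout, set $D\coloneqq c_1(L)-B$ and $Z_k\coloneqq Z_{kB,k\omega}$. The plan is to make everything explicit by computing the central charges of $L^{\otimes k}$ and of the two kinds of candidate destabilizers in Conjecture~\ref{conj:ArcaraMiles}, namely $L^{\otimes k}(-C)$ and $L^{\otimes k}(C)|_C$. From $Z_k(E)=-\int e^{-(kB+\sqrt{-1}k\omega)}\ch(E)$ one gets
$$Z_k(L^{\otimes k})=-\tfrac{k^2}{2}(D-\sqrt{-1}\omega)^2,\qquad Z_k(L^{\otimes k}(-C))=Z_k(L^{\otimes k})+k\,C.(D-\sqrt{-1}\omega)-\tfrac{C^2}{2},$$
and also $Z_k(L^{\otimes k}(C)|_C)=-k\,C.(D-\sqrt{-1}\omega)-\tfrac{C^2}{2}$. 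The phase comparison $\Arg Z_k(L^{\otimes k})\le \Arg Z_k(\text{sub})$, with both phases in $(0,\pi]$, is equivalent to $\operatorname{Im}\big(\overline{Z_k(L^{\otimes k})}\,Z_k(\text{sub})\big)\ge 0$. Expanding, this quantity has the shape
$$\pm\Big(k^3\cdot c\cdot Q_C \;+\; k^2\cdot\tfrac{C^2}{2}\,(\omega.D)\Big)$$
for a fixed positive constant $c$. Here $Q_C\coloneqq (C.\omega)(\omega^2-D^2)+2(C.D)(\omega.D)$ is exactly the dHYM expression from Remark~\ref{rmk:dHYM}, and its sign is arranged so that $Q_C<0$ favours destabilization. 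I would fix the signs carefully once, separately for $\omega.D>0$ (object $L^{\otimes k}$ in the heart) and for $\omega.D<0$ (object $L^{\otimes k}[1]$). This bookkeeping is the main obstacle, but it is routine.

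For part (a), the case $\omega.D=0$ is trivial by Remark~\ref{rmk:dHYM}\ref{item:dHYM-slope-0-stable}. Suppose $\omega.D>0$ and some curve $C$ has $Q_C<0$. Then for large $k$ I would show that $L^{\otimes k}(-C)\to L^{\otimes k}$ is an injection in $\sA_{kB,k\omega}$.
\begin{itemize}
\item Its cokernel $L^{\otimes k}|_C$ is torsion, hence lies in $\sT$.
\item $L^{\otimes k}(-C)\in\sA_{kB,k\omega}$ by Remark~\ref{rmk:BSC}\ref{item:BSC-slope>0}, since $\omega.(kD-C)>0$ for $k\gg0$.
\end{itemize}
The $k^3$ term dominates, so the phase inequality holds strictly for $k\gg0$ and $L^{\otimes k}$ is unstable, a contradiction.

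If $\omega.D<0$, I would argue the same way with $L^{\otimes k}(C)|_C\hookrightarrow L^{\otimes k}[1]$. This comes from the triangle $L^{\otimes k}\to L^{\otimes k}(C)\to L^{\otimes k}(C)|_C$. The quotient $L^{\otimes k}(C)[1]$ lies in the heart because $\omega.(kD+C)\le0$ for $k\gg0$, using Remark~\ref{rmk:BSC}\ref{item:BSC-slope<=0}. This argument uses an arbitrary curve $C$, with no need for negative self-intersection, so no conjecture is required.

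For part (b), if $\omega.D=0$ then $L^{\otimes k}$ is stable for every $k$ by Remark~\ref{rmk:BSC}\ref{item:BSC-slope=0}. Otherwise, assume $L^{\otimes k}$ is unstable for some $k\ge1$. Conjecture~\ref{conj:ArcaraMiles} then provides a curve $C$ with $C^2<0$ for which the relevant subobject satisfies the weak phase inequality, so the displayed quantity has the destabilizing sign. Semistability gives $Q_C\ge0$, so the $k^3$ term has the non-destabilizing sign. Moreover $C^2<0$ and $\omega.D\neq0$ make the $k^2$ term strictly non-destabilizing; I would check that the signs of $\omega.D$ and of the $\pm$ match in both cases. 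Hence the sum strictly violates the inequality, a contradiction. So $L^{\otimes k}$ is $\sigma_{kB,k\omega}$-stable for every $k\ge1$.
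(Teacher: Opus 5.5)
Your proposal is correct and is essentially the paper's proof. You use the same two short exact sequences in $\sA_{kB,k\omega}$ for $k\gg0$, you prove part (a) as the contrapositive of the paper's $k\to\infty$ limit, and you prove part (b) via the Arcara--Miles conjecture plus the observation that $C^2<0$ makes the $1/k$ correction help.

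One bookkeeping correction: your displayed shape is right only when $\omega.D<0$. For $\omega.D>0$ one gets $\operatorname{Im}\bigl(\overline{Z_k(L^{\otimes k})}\,Z_k(L^{\otimes k}(-C))\bigr)=-\tfrac{k^3}{2}Q_C+\tfrac{k^2}{2}C^2(\omega.D)$, which has a relative sign between the two terms that no overall $\pm$ can absorb. So in both cases the quantity is $-\tfrac{k^3}{2}Q_C+\tfrac{k^2}{2}C^2\,|\omega.D|$, and this is what makes your claim that the $k^2$ term is strictly non-destabilizing for $C^2<0$ true in both cases.
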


\begin{proof}[Proof of \ref{item:twisted(a)}]
For every curve $C\subseteq X$, we wish to verify the inequality
$$
(C.\omega)\left(\omega^2-(c_1(L)-B)^2\right)+2(C.(c_1(L)-B))(\omega.(c_1(L)-B))\geq0.
$$
To do this, we separate the analysis into cases depending on the slope of $L$.

\medskip
\noindent\textbf{Case 1: $\omega.(c_1(L)-B)=0$.}
In this case, $L$ is always $B$-twisted dHYM-stable by Remark~\ref{rmk:dHYM}\ref{item:dHYM-slope-0-stable}.

\medskip
\noindent\textbf{Case 2: $\omega.(c_1(L)-B)>0$.}
By Remark~\ref{rmk:BSC}\ref{item:BSC-slope>0}, we have $L\in\sA_{B,\omega}$.
Similarly, $L^{\otimes k}\in\sA_{kB,k\omega}$ for all $k\geq1$ since
$$
k\omega.(c_1(L^{\otimes k})-kB)=k^2\omega.(c_1(L)-B)>0.
$$
For any curve $C\subseteq X$, consider the line bundle $L^{\otimes k}(-C)$. Observe that
$$
k\omega.(c_1(L^{\otimes k}(-C))-kB)=
k\omega.(kc_1(L)-[C]-kB)>0
$$
for $k$ sufficiently large.
Therefore, $L^{\otimes k}(-C)\in\sA_{kB,k\omega}$ for large $k$.
Because $L^{\otimes k}|_C$ is a torsion sheaf (and therefore lies in the heart), this yields a short exact sequence
$$
0\to L^{\otimes k}(-C) \to L^{\otimes k} \to L^{\otimes k}|_C \to 0
$$
in $\sA_{kB,k\omega}$ for large $k$.

By assumption, $L^{\otimes k}$ is $\sigma_{kB,k\omega}$-stable for large $k$, which enforces the phase condition:
$$
0<\Arg Z_{kB,k\omega}(L^{\otimes k}(-C))<\Arg Z_{kB,k\omega}(L^{\otimes k})<\pi.
$$
A direct computation shows that this phase inequality is equivalent to
$$
(C.\omega)\left(\omega^2-(c_1(L)-B)^2\right)+\left(2C.(c_1(L)-B)-\frac{C^2}{k}\right)(\omega.(c_1(L)-B))>0.
$$
Since this inequality holds for any sufficiently large $k$, taking the limit as $k\to\infty$ yields
$$
(C.\omega)\left(\omega^2-(c_1(L)-B)^2\right)+2(C.(c_1(L)-B))(\omega.(c_1(L)-B))\geq0,
$$
which is precisely the condition for dHYM-semistability.

\medskip
\noindent\textbf{Case 3: $\omega.(c_1(L)-B)<0$.}
In this case, we have $L^{\otimes k}[1]\in\sA_{kB,k\omega}$ for all $k\geq1$ by Remark~\ref{rmk:BSC}\ref{item:BSC-slope<=0}.
Fixing a curve $C\subseteq X$ and applying a similar argument, we find that for large $k$, $L^{\otimes k}(C)[1]\in\sA_{kB,k\omega}$. This yields the following short exact sequence in the heart $\sA_{kB,k\omega}$:
$$
0 \to L^{\otimes k}(C)|_C \to L^{\otimes k}[1] \to L^{\otimes k}(C)[1] \to 0.
$$
By assumption, $L^{\otimes k}[1]$ is $\sigma_{kB,k\omega}$-stable for large $k$, which requires $\Arg Z_{kB,k\omega}(L^{\otimes k}[1])<\Arg Z_{kB,k\omega}(L^{\otimes k}(C)[1])$. This is equivalent to
$$
(C.\omega)\left(\omega^2-(c_1(L)-B)^2\right)+\left(2C.(c_1(L)-B)+\frac{C^2}{k}\right)(\omega.(c_1(L)-B))>0.
$$
Taking the limit as $k\to\infty$, this again implies dHYM-semistability.
\end{proof}

\begin{proof}[Proof of \ref{item:twisted(b)}]
As before, we separate the analysis into cases depending on the slope of $L$.

\medskip
\noindent\textbf{Case 1: $\omega.(c_1(L)-B)=0$.}
In this boundary case, $L^{\otimes k}$ is immediately $\sigma_{kB,k\omega}$-stable for all $k\geq1$ by Remark~\ref{rmk:BSC}\ref{item:BSC-slope=0}.

\medskip
\noindent\textbf{Case 2: $\omega.(c_1(L)-B)>0$.}
Here, we wish to verify the $\sigma_{kB,k\omega}$-stability of $L^{\otimes k}$.
Assuming Conjecture~\ref{conj:ArcaraMiles}\ref{item:ArcaraMiles(a)}, it suffices to check the phase inequality for all curves $C\subseteq X$ with $C^2<0$ such that $L^{\otimes k}(-C)$ is a subobject of $L^{\otimes k}$ in the heart.
By the algebraic expansion from the previous proof, this is equivalent to showing that
$$
(C.\omega)\left(\omega^2-(c_1(L)-B)^2\right)+\left(2C.(c_1(L)-B)-\frac{C^2}{k}\right)(\omega.(c_1(L)-B))>0.
$$
Since $C^2<0$ and $\omega.(c_1(L)-B)>0$, the added term $-\frac{C^2}{k}(\omega.(c_1(L)-B))$ is strictly positive. Therefore, this required inequality is an immediate consequence of the given dHYM-semistability condition.

\medskip
\noindent\textbf{Case 3: $\omega.(c_1(L)-B)<0$.}
Finally, assuming Conjecture~\ref{conj:ArcaraMiles}\ref{item:ArcaraMiles(b)}, it suffices to verify that for all curves $C$ with $C^2<0$, we have
$$
(C.\omega)\left(\omega^2-(c_1(L)-B)^2\right)+\left(2C.(c_1(L)-B)+\frac{C^2}{k}\right)(\omega.(c_1(L)-B))>0.
$$
Since $C^2<0$ and $\omega.(c_1(L)-B)<0$, the additional term $+\frac{C^2}{k}(\omega.(c_1(L)-B))$ is, once again, strictly positive. Thus, the required strict inequality follows directly from dHYM-semistability.
\end{proof}

\subsection{Examples.}
We conclude this note with two examples illustrating the subtleties of scaling and strict stability.

\begin{eg}
\label{eg:not-scaling-invariant}
Let us recall an example from \cite[Page~31]{CollinsShi}. Let $X=\operatorname{Bl}_{\text{pt}}(\bP^2)$ be the blowup of $\bP^2$ at a point. Denote the pullback of the hyperplane class from $\bP^2$ by $H$, and let $E$ be the exceptional divisor. Then
$$
\omega=\frac{1}{\sqrt3}(2H-E)
$$
is an ample divisor class. Consider a line bundle $L$ with $c_1(L)=2H$.

As noted in \cite[Page~31]{CollinsShi} (relying on \cite{ArcaraMiles}), $L$ is $\sigma_{0,\omega}$-stable but fails to be dHYM-semistable with respect to $\omega$. This provides a perfect illustration of the \emph{non-scaling-invariance} of Bridgeland stability conditions. 

Because Conjecture~\ref{conj:ArcaraMiles} holds for this surface \cite[Theorem~1.1]{ArcaraMiles}, Theorem~\ref{thm:MainThm} implies that $L$ would be dHYM-semistable with respect to $\omega$ if and only if $L^{\otimes k}$ were $\sigma_{0,k\omega}$-stable for all $k\geq1$. However, while $L$ itself (the $k=1$ case) is $\sigma_{0,\omega}$-stable, its higher tensor powers $L^{\otimes k}$ are not $\sigma_{0,k\omega}$-stable for any $k\geq2$. To see this, note that both $L^{\otimes k}$ and $L^{\otimes k}(-E)$ lie in the heart $\sA_{0,k\omega}$. Evaluating the stability phase condition on the exceptional curve $E$, we obtain:
$$
(E.\omega)\left(\omega^2-c_1(L)^2\right)+\left(2E.c_1(L)-\frac{E^2}{k}\right)(\omega.c_1(L))=
\frac{1}{\sqrt{3}}\left(-3+\frac{4}{k}\right)<0 \quad \text{for} \quad k\geq2.
$$
\end{eg}

\begin{eg}
\label{eg:scalingstable-notdHYMstable}
We again consider $X=\operatorname{Bl}_{\text{pt}}(\bP^2)$ with the same ample class $\omega$. We claim that the line bundle $L$ with $c_1(L)=H$ satisfies the following properties:
\begin{enumerate}[label=(\alph*)]
    \item $L$ is not dHYM-stable with respect to $\omega$, yet
    \item $L^{\otimes k}$ is $\sigma_{0,k\omega}$-stable for all $k\geq 1$.
\end{enumerate}

To verify (a), we evaluate the dHYM-stability criterion on the exceptional curve $E$:
$$
(E.\omega)\left(\omega^2-c_1(L)^2\right)+2(E.c_1(L))(\omega.c_1(L))=\frac{1}{\sqrt{3}}(0)+2(0)\left(\frac{2}{\sqrt{3}}\right)=0.
$$
Because this is not strictly positive, $L$ is not dHYM-stable (although it is dHYM-semistable).

To verify (b), we rely again on the fact that Arcara and Miles proved their conjecture in this setting \cite[Theorem~1.1]{ArcaraMiles}. Therefore, to show that $L^{\otimes k}$ is $\sigma_{0,k\omega}$-stable for all $k\geq 1$, it suffices to demonstrate that the strict phase inequality holds for every curve $C$ with $C^2<0$:
$$
(C.\omega)\left(\omega^2-c_1(L)^2\right)+\left(2C.c_1(L)-\frac{C^2}{k}\right)(\omega.c_1(L))>0.
$$
Since $\omega^2-c_1(L)^2=0$ and $\omega.c_1(L)>0$, this reduces to showing that
$$
2C.H-\frac{C^2}{k}>0.
$$
Given that $C^2<0$, the second term $-\frac{C^2}{k}$ is strictly positive, so it simply suffices to check that $C.H\geq0$. This follows from the elementary fact that the classes of effective curves with negative self-intersection on $X$ are of the form
$$
aH+bE \qquad \text{ for some } \qquad b>a\geq0.
$$
Thus,
$$
C.H=(aH+bE).H=a\geq0,
$$
which confirms that $L^{\otimes k}$ is $\sigma_{0,k\omega}$-stable for all $k\geq 1$.
\end{eg}

\bigskip
\bibliography{ref}
\bibliographystyle{alpha}

\ \\

\footnotesize{
\noindent Yu-Wei Fan \\
\textsc{Center for Mathematics and Interdisciplinary Sciences, Fudan University, \\ Shanghai 200433, China}\\
\textsc{Shanghai Institute for Mathematics and Interdisciplinary Sciences (SIMIS), \\  Shanghai 200433, China}\\
\texttt{yuweifanx@gmail.com}
}

\end{document}